\theoremstyle{plain}
\numberwithin{equation}{section}
\newtheorem{theorem}{Theorem}[section]
\newtheorem{lemma}[theorem]{Lemma}
\theoremstyle{definition}
\newtheorem{remark}[theorem]{Remark}
\def \div {\mathrm{div}}
\def \e {\epsilon}
\def \p {\partial}
\def \Je {\mathcal{J}_{\epsilon}}
\newcommand{\norm}[1]{\left\lVert#1\right\rVert}    
\newcommand\abs[1]{\left|#1\right|}    
\begin{document}
\title{On the blow up of a non-local transport equation in compact manifolds}

\author{Diego Alonso-Or\'{a}n}
\address{Departamento de Analisis Matem\'atico, Universidad de La Laguna, C/ Astrof\'isico Francisco Sanchez s/n, 38271,  Spain}
\email{dalonsoo@ull.edu.es}
\author{\'Angel David Mart\'inez}
\address{Fields Ontario Postdoctoral Fellow\\ University of Toronto Mississauga, Toronto, Ontario, Canada}
\email{martinez@math.toronto.edu}

\subjclass[2010]{35Q35,35B44,35R01} 
\keywords{Non-local; Singularities; Riemannian manifold; Transport equation}

\date{\today}

\begin{abstract}
In this note we show finite time blow-up for a class of non-local active scalar equations on compact Riemannian manifolds. The strategy we follow was introduced by Silvestre and Vicol to deal with the one dimensional C\'ordoba-C\'ordoba-Fontelos equation and might be regarded as an instance of De Giorgi's method. 
\end{abstract}
\maketitle
\section{Introduction}
The problem of global regularity versus finite time blow up for active scalar equation with non-local velocities has received a lot of attention in recent years. In their seminal paper  C\'ordoba, C\'ordoba and Fontelos introduced the following non-local equation
\begin{equation}\label{CCF}
 \partial_t\theta-\mathcal{H}\theta\cdot\theta_x=0
\end{equation}
where $\mathcal{H}$ is the Hilbert transform (cf. \cite{CCF}). The equation constitutes a one dimensional model for the two-dimensional surface quasi-geostrophic equation (SQG), as well as for the Birkhoff-Rott equations describing the evolution of vortex sheets with surface tension \cite{Bakeretal, Morlet}. In the aforementioned paper \cite{CCF} the authors first proved the finite time blow-up of classical solutions to \eqref{CCF} for a generic class of smooth initial data. Their delicate argument relies on an integral inequality shown by means of the Mellin transform and complex analysis.  Subsequent works have  shown finite blow-up avoiding complex analysis techniques (cf. \cite{Kiselev1}). In particular, in \cite{SV} Silvestre and Vicol provided four elegant and simple proofs of the blow-up phenomena. \\ 

Another model including dissipative effects to the equation \eqref{CCF} is
 
\begin{equation}\label{CCFd}
\partial_t\theta-\mathcal{H}\theta\cdot\theta_x +\kappa \Lambda^{\gamma}\theta=0 ,
\end{equation}
where $\kappa \geq 0$, $\gamma\in (0, 2]$ are fixed parameters and $\Lambda^{\gamma}=(-\Delta)^{\gamma/2}$ is the fractional Laplacian. In \cite{CCF}, the authors also obtained global well posedness for positive $H^{2}$ data in the subcritical case $\gamma>1$ and for small initial data in the critical case $\gamma=1$. Later, the global regularity in the critical case was shown in \cite{Dong1} for arbitrary initial data in Sobolev spaces by adapting the method of continuity in \cite{KNV}. In \cite{LiRodrigo1}, finite time singularities of smooth solutions was shown in the range $0<\gamma<\frac{1}{2}$. For the intermediate case, $\frac{1}{2}\leq \gamma < 1$,  whether solutions may blow up in finite time is an intriguing open problem. \\

Higher dimensional analogues  of equation \eqref{CCFd} have also been studied extensively. In \cite{LiRodrigo2}, the authors studied the following non-local equation given by
\begin{equation}
\left\{
\begin{array}{lcl}\label{GCCFd}
\partial_{t}\theta+u\cdot \nabla \theta +\kappa \Lambda^{\gamma}\theta=0, \quad \mbox{on } \mathbb{R}^{2} \times(0,\infty)\\
\theta(x,0)=\theta_{0}(x), \quad \mbox{in }\mathbb{R}^{2}
\end{array}
\right.
\end{equation}
where the velocity $u$ is defined by
\begin{equation*}
u=\mathcal{Q}_{\beta}\Lambda^{-1}\nabla\theta
\end{equation*}
with
$$ \mathcal{Q}_{\beta}=\begin{pmatrix}
\cos{\beta} & -\sin{\beta} \\
\sin{\beta} & \cos{\beta}
\end{pmatrix}, \quad \beta\in [0,2\pi).$$
If $\beta=\pi/2,3\pi/2$, we recover the SQG equation (where the velocity field is incompressible). This equation was derived to model frontogenesis in meteorology, a formation of sharp fronts between masses of hot and cold air. It is an important example of 2D active scalars and in the context of geophysical fluid dynamics, the variable $\theta$ denotes the temperature (or surface buoyancy
function) in a rapidly rotating stratified fluid with uniform potential vorticity \cite{HPGS95,Ped}. We refer the reader to \cite{CMT, Ju, Kiselev1, KNV,R}  for more details both from the theoretical and numerical point of view. For $\beta \in [0,2\pi)\setminus \{ \pi/2,3\pi/2\}$ and $\kappa=0$, Dong and Li showed in \cite{DongLi1} the blow up of smooth radial solutions, while Balodis and C\'ordoba  \cite{BC} proved that finite time blow up occurs when $\beta = 0$ or $\pi$ without the radial restriction. Their proof, based on an integral inequality for the Riesz transform, applies also for higher dimensions $n\geq 2$. Later, under similar assumptions, finite time singularity formation was obtained in \cite{LiRodrigo2} in the supercritical range $0<\gamma<\frac{1}{2}$.\\

This short note originated while the authors studied the possibility of finite time blow-up of smooth solutions for the following nonlinear and non-local active scalar equation on compact Riemannian manifolds $(M,g)$  
\begin{equation}
\left\{
\begin{array}{lcl}\label{eq:active:scalar}
\partial_{t}\theta+u\cdot\nabla_{g} \theta =0, \quad \mbox{on }M\times(0,\infty)\\
\theta(x,0)=\theta_{0}(x), \quad \mbox{in }M
\end{array}
\right.
\end{equation}
where $\theta(x,t)$ is a transported scalar-valued function, $u$ is the velocity field determined by the constitutive relation
\begin{equation}\label{constituve:law2}
u=\nabla_{g}\Lambda_{g}^{-1}\theta
\end{equation}
and $\Lambda_{g}=(-\Delta_{g})^{\frac{1}{2}}$ denotes the fractional Laplace-Beltrami operator. The authors interest in this type of blow up scenario, in the particular case $M=\mathbb{S}^2$, came naturally by analogy with the global existence of classical solutions to the critical SQG equation on the sphere  (cf. \cite{AOCM, AOCM2}). At the time, the authors unsuccesfully tried to extend the work of Balodis and C\'ordoba on the relevant non-local nonlinear inequality using a (spherical) harmonic analysis approach. Quite surprisingly the ideas of \cite{SV} can be adapted to general Riemannian manifolds. The proof works by contradiction, using De Giorgi's iteration scheme to derive a decay for the $L^\infty$ norm of the active scalar. This decay implies that the solution must develop a singularity in finite time. 

We can now state a first result in this direction, namely:
\begin{theorem}\label{Th:1}
There are smooth initial conditions $\theta_{0}(x)\in L^1(M)\cap L^{\infty}(M)$ for which \eqref{eq:active:scalar}-\eqref{constituve:law2} blows up in finite time.
\end{theorem}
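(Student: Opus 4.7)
The plan is to argue by contradiction via a De Giorgi iteration on the level sets of $\theta$, adapting the strategy of Silvestre--Vicol to the Riemannian setting. I would pick smooth, nontrivial, non-negative initial data $\theta_0$ with $M_0:=\|\theta_0\|_{L^\infty(M)}>0$ and suppose that the corresponding solution $\theta\in C^\infty(M\times[0,\infty))$ to \eqref{eq:active:scalar}--\eqref{constituve:law2} exists for all time. The starting observation is that smoothness forces a rigid conservation: since the drift $u=\nabla_g\Lambda_g^{-1}\theta$ is smooth in both variables, its flow is a family of diffeomorphisms $X(t,\cdot):M\to M$, and the identity $\theta(X(t,x),t)=\theta_0(x)$ shows that the pointwise range of $\theta(\cdot,t)$ is preserved, so
\begin{equation*}
\|\theta(\cdot,t)\|_{L^\infty(M)}=M_0 \quad \text{for every } t\geq 0.
\end{equation*}
The remainder of the proof is devoted to contradicting this equality by showing that the $L^\infty$ norm must decay.

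To set up the De Giorgi scheme, I introduce the truncations $\theta_\ell:=(\theta-\ell)_+$ for $\ell\geq 0$. Testing \eqref{eq:active:scalar} against $\theta_\ell$ and using $\mathrm{div}_g u=-\Lambda_g\theta$ (which follows from \eqref{constituve:law2} together with $\Delta_g\Lambda_g^{-1}=-\Lambda_g$ on mean-zero functions), a short computation yields the energy identity
\begin{equation*}
\frac{d}{dt}\int_M \theta_\ell^{\,2}\,d\mu_g=-\int_M \theta_\ell^{\,2}\,\Lambda_g\theta\,d\mu_g.
\end{equation*}
The next task is to extract a positive dissipation term controlling $\|\Lambda_g^{1/2}\theta_\ell\|_{L^2}^2$ from the right-hand side by means of a Riemannian analogue of the Córdoba--Córdoba pointwise inequality $2v\,\Lambda_g v\geq\Lambda_g(v^2)$, combined with the self-adjointness of $\Lambda_g$. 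One then runs the classical De Giorgi iteration along a geometric sequence of levels $\ell_k=L(1-2^{-k})$, using the fractional Sobolev embedding $H^{1/2}(M)\hookrightarrow L^{2n/(n-1)}(M)$ on the compact manifold of dimension $n$, to bootstrap those level-set $L^2$ bounds into an $L^\infty$ decay of the form
\begin{equation*}
\|\theta(\cdot,t)\|_{L^\infty(M)}\leq \frac{C(M,\theta_0)}{t^{\alpha}}
\end{equation*}
for some exponent $\alpha>0$, which is incompatible with the constant lower bound $M_0>0$ obtained in the first step.

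The main obstacle is making the two analytic ingredients — the Córdoba--Córdoba pointwise inequality and the fractional Sobolev embedding — available in genuine Riemannian generality with constants uniform in the base point, since $\Lambda_g$ does not admit a clean singular-integral representation on an arbitrary $(M,g)$. The natural substitute is the subordination formula
\begin{equation*}
\Lambda_g f(x)=c\int_0^\infty\bigl(f(x)-e^{-s\Delta_g}f(x)\bigr)\,\frac{ds}{s^{3/2}},
\end{equation*}
which, together with the positivity and stochastic completeness of the heat semigroup $e^{-s\Delta_g}$, should deliver the desired pointwise inequality; the compactness of $M$ (and, if needed, a uniform lower Ricci bound) will secure uniformity of the constants. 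Once these analytic inputs are established, the De Giorgi iteration proceeds in essential parallel to the Euclidean argument in \cite{SV}.
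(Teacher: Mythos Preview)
Your overall architecture (contradiction via maximum principle plus a De~Giorgi level-set iteration) matches the paper's, but the execution diverges at the very first analytic step and this creates a real gap.

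\textbf{Where it breaks.} You test the truncated equation against $\theta_\ell$ and track the $L^2$ energy, arriving at
\[
\frac{d}{dt}\int_M \theta_\ell^{\,2}\,d\mu_g=-\int_M \theta_\ell^{\,2}\,\Lambda_g\theta\,d\mu_g,
\]
and then assert that the C\'ordoba--C\'ordoba inequality $2v\Lambda_g v\ge \Lambda_g(v^2)$ lets you extract a dissipation controlling $\|\Lambda_g^{1/2}\theta_\ell\|_{L^2}^2$. That step does not go through. Applying C\'ordoba--C\'ordoba to $\phi(x)=(x-\ell)_+$ gives $\int\theta_\ell^{\,2}\Lambda_g\theta\ge\int\theta_\ell^{\,2}\Lambda_g\theta_\ell$, and the best you then get (via $\phi(v)=v^{3/2}$) is $\int\theta_\ell^{\,2}\Lambda_g\theta_\ell\ge\tfrac{2}{3}\|\theta_\ell^{3/2}\|_{\dot H^{1/2}}^2$, a \emph{cubic} quantity in $\theta_\ell$. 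This cannot dominate the quadratic $\|\theta_\ell\|_{\dot H^{1/2}}^2$: take $\theta_\ell$ a small multiple of a fixed bump. The inequality $2v\Lambda_g v\ge\Lambda_g(v^2)$ you cite, combined with self-adjointness, only yields the tautology $\int\theta_\ell^{\,2}\Lambda_g\theta_\ell\ge\tfrac12\int\theta_\ell^{\,2}\Lambda_g\theta_\ell$.

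\textbf{What the paper does instead.} The paper tracks the $L^1$ energy $E_k=\int_M\theta_k$: simply integrating $\partial_t\theta_k+u\cdot\nabla_g\theta_k=0$ over $M$ and using $\mathrm{div}_g u=-\Lambda_g\theta$ gives
\[
\partial_t\int_M\theta_k=-\int_M \theta_k\,\Lambda_g\theta\;\le\;-\int_M\theta_k\,\Lambda_g\theta_k=-\|\theta_k\|_{\dot H^{1/2}}^2,
\]
where the inequality is C\'ordoba--C\'ordoba for $\phi(x)=(x-\ell_k)_+$. This is the crucial ``Virial'' structure: the $L^1$ (not $L^2$) balance produces exactly the quadratic $\dot H^{1/2}$ dissipation needed for the iteration.

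\textbf{Second difference.} The paper does not use the scale-invariant embedding $H^{1/2}\hookrightarrow L^{2n/(n-1)}$ that you invoke. Since the De~Giorgi energies are $L^1$-based, the relevant interpolation is between $L^1$ and $\dot H^{1/2}$, and the paper proves a bespoke inequality
\[
\|f\|_{L^2}\lesssim \|f\|_{L^1}^{\frac{1}{2n}}\|f\|_{\dot H^{1/2}}^{\frac{2n-1}{2n}}+\|f\|_{L^1}
\]
via Weyl's law and H\"ormander's eigenfunction bound. The extra additive $\|f\|_{L^1}$ term reflects the absence of scaling on $M$ and forces the smallness assumption on $\|\theta_0\|_{L^1}$ that appears in the sharper Theorem~\ref{Th:2}; this subtlety is absent from your sketch.
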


It should be noted that even in the two dimensional torus Theorem \ref{Th:1} provides a new result that seems to be out of reach \textcolor{black}{using the classical C\'ordoba, C\'ordoba, Fontelos inequalities first proved in \cite{CCF} nor can one argue radially as in the full euclidean space \cite{DongLi1, LiRodrigo2}}. Furthermore, any perturbation of the metric might affect the subtle non-local integral inequalities involved. This leaves open the question whether the results regarding the finite time blow-up of the dissipative equation analogous to \eqref{CCFd} can be extended to this setting (even in the range  $0<\gamma<\frac{1}{2}$).\\

Before stating the main result, which is stronger than Theorem \ref{Th:1}, let us digress and introduce the so-called interpolating models between the two dimensional Euler equation and the singular surface quasi-geostrophic equation and their dissipative analogues. These had already been extensively studied in the literature, namely
\begin{equation}
\left\{
\begin{array}{lcl}\label{eq:alpha}
\partial_{t}\theta+u\cdot \nabla \theta +\kappa \Lambda^{\gamma}\theta=0, \quad \mbox{on } \mathbb{R}^{2} \times(0,\infty)\\
\theta(x,0)=\theta_{0}(x), \quad \mbox{in }\mathbb{R}^{2}
\end{array}
\right.
\end{equation}
where the velocity $u$ reads
\begin{equation}\label{constituve:law3}
u =  \Lambda^{-1+ \alpha}\nabla \theta
\end{equation}
for $-1\leq\alpha\leq 1$ (cf. \cite{DongLi1,DongLi2,Chae,CY,LiRodrigo3}). We refer the reader to the aforementioned papers and their references therein for a more extended analysis in this area.\\

This suggest considering the following interpolating nonlinear and non-local active scalar equation on compact Riemannian manifolds $(M,g)$  
\begin{equation}
\left\{
\begin{array}{lcl}\label{eq:active:scalar2}
\partial_{t}\theta+u\cdot\nabla_{g} \theta =0, \quad \mbox{on }M\times(0,\infty)\\
\theta(x,0)=\theta_{0}(x), \quad \mbox{in }M
\end{array}
\right.
\end{equation}
where $\theta(x,t)$ is a scalar-valued function and $u$ is the velocity field determined by the constitutive relation
\begin{equation}\label{constituve:law}
u=\nabla_{g}\Lambda_{g}^{-1+\alpha}\theta,  \textcolor{black}{\mbox{ with } \alpha\in(-1,1).}
\end{equation}
The range of $\alpha$ interpolates in one dimension between the one-dimensional version of the 2D Euler vorticity equation, corresponding $\alpha=-1$, and the Hamilton-Jacobi equation, for $\alpha=1$.\\

Finally, let us state the main result:

\begin{theorem}\label{Th:2}
There is no classical global solution to \textcolor{black}{\eqref{eq:active:scalar2}-\eqref{constituve:law}} for any positive smooth integrable initial datum with $\|\theta_0\|_{\infty}=1$ and small enough norm $\theta_{0}(x)$ in $L^1(M)$ .
\end{theorem}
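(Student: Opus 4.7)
The plan is to derive a contradiction between two facts: on the one hand, along characteristics a classical solution $\theta$ preserves its essential range, so $\|\theta(\cdot,t)\|_{L^\infty(M)} = 1$ for all $t\geq 0$; on the other hand, a De Giorgi iteration driven by the dissipation built into $\div_g u = -\Lambda_g^{1+\alpha}\theta$ (note $1+\alpha\in(0,2)$) pushes the super-level sets of $\theta$ strictly below $1$ in finite time, provided $\|\theta_0\|_{L^1(M)}$ is small enough. The route is the De Giorgi variant among the four proofs in \cite{SV}.

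As a preliminary, for a classical solution $u$ is $C^1$, so its flow $X_t$ is a diffeomorphism of $M$ and $\theta(X_t(x),t)=\theta_0(x)$; in particular the maximum is attained at each time and equals $1$. Testing the equation against $1$ and using $\Delta_g \Lambda_g^{-1+\alpha} = -\Lambda_g^{1+\alpha}$ gives
\begin{equation*}
\frac{d}{dt}\int_M \theta\, d\mu_g = -\bigl\|\Lambda_g^{(1+\alpha)/2}\theta\bigr\|_{L^2(M)}^2 \leq 0,
\end{equation*}
so for positive data $\|\theta(\cdot,t)\|_{L^1}\leq \|\theta_0\|_{L^1}$ remains small throughout the life of the solution.

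Next, set $\lambda_0\in(0,1)$, $M_k=1-\lambda_0-\lambda_0 2^{-k-1}$, $\theta_k=(\theta-M_k)_+$, and $E_k:=\sup_{[0,T_*]}\|\theta_k\|_{L^2(M)}^2$. Multiplying the equation by $p\theta_k^{p-1}$ and integrating by parts,
\begin{equation*}
\frac{d}{dt}\int_M\theta_k^p\, d\mu_g = -\int_M \theta_k^p\, \Lambda_g^{1+\alpha}\theta\, d\mu_g.
\end{equation*}
Decomposing $\theta=\theta_k+\min(\theta,M_k)$, the contribution of $\min(\theta,M_k)$ is nonnegative on $\{\theta_k>0\}$ by the nonnegative-kernel representation
\begin{equation*}
\Lambda_g^{1+\alpha}\varphi(x) = \mathrm{p.v.}\int_M \bigl(\varphi(x)-\varphi(y)\bigr)K_{1+\alpha}(x,y)\, d\mu_g(y), \qquad K_{1+\alpha}\geq 0,
\end{equation*}
built from the subordinated heat semigroup of $(M,g)$. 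Symmetrizing the remaining term and invoking the elementary inequality $(a^p-b^p)(a-b)\geq \tfrac{4p}{(p+1)^2}(a^{(p+1)/2}-b^{(p+1)/2})^2$ yields the dissipation bound
\begin{equation*}
\frac{d}{dt}\|\theta_k\|_{L^p}^p + c_{p,\alpha}\,\bigl\|\Lambda_g^{(1+\alpha)/2}\theta_k^{(p+1)/2}\bigr\|_{L^2(M)}^2 \leq 0.
\end{equation*}
Coupling this with the fractional Sobolev embedding on $M$ and the Chebyshev bound $|\{\theta_{k+1}>0\}|\leq(\lambda_0 2^{-k-2})^{-p}\|\theta_k\|_{L^p}^p$, a standard interpolation produces the nonlinear recursion $E_{k+1}\leq C\mu^{k}E_k^{1+\nu}$ with constants $\mu>1$ and $\nu>0$ depending only on $n=\dim M$ and $\alpha$. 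Taking $\|\theta_0\|_{L^1}$ small forces $E_0\leq\|\theta_0\|_\infty\|\theta_0\|_{L^1}$ below the critical threshold of this recursion, so $E_k\to 0$ and hence $\theta\leq 1-\lambda_0/2$ on $M\times[0,T_*]$ by continuity, contradicting the preliminary step.

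The principal obstacle is to transfer, cleanly and with uniform constants, the nonnegative singular-integral representation of $\Lambda_g^{s}$ and the accompanying C\'ordoba--C\'ordoba pointwise inequality from Euclidean space or the torus to an arbitrary compact Riemannian manifold. This is most naturally achieved through Bochner's subordination formula applied to the heat kernel of $(M,g)$, together with verifying the sharp near-diagonal asymptotics $K_s(x,y)\asymp d_g(x,y)^{-n-s}$; once these nonlocal tools and the fractional Sobolev inequality on $(M,g)$ are in hand, the Silvestre--Vicol De Giorgi scheme transfers essentially verbatim.
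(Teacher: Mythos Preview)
Your overall strategy---run a De Giorgi iteration on the level sets of a putative global solution and contradict the conservation of $\|\theta(\cdot,t)\|_{L^\infty}=1$---is exactly the route taken in the paper, and your treatment of the nonlinearity via $\div_g u=-\Lambda_g^{1+\alpha}\theta$ together with the positive-kernel decomposition $\theta=\theta_k+\min(\theta,M_k)$ is correct. The gap is in how the time variable enters the iteration.

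Your own dissipation inequality gives $\frac{d}{dt}\|\theta_k\|_{L^p}^p\le 0$ for every $p\ge 1$; in particular $t\mapsto\|\theta_k(t)\|_{L^2}^2$ is nonincreasing, so
\[
E_k:=\sup_{[0,T_*]}\|\theta_k\|_{L^2}^2=\|\theta_k(\cdot,0)\|_{L^2}^2=\|(\theta_0-M_k)_+\|_{L^2}^2,
\]
a quantity depending \emph{only on the initial datum}. The recursion $E_{k+1}\le C\mu^k E_k^{1+\nu}$ would then have to be a purely spatial inequality for $\theta_0$; but the superlinear exponent $1+\nu$ requires the fractional Sobolev gain coming from $\|\Lambda_g^{(1+\alpha)/2}\theta_k^{(p+1)/2}\|_{L^2}$, and that control is only available as a \emph{time integral}, not at $t=0$. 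As written the loop does not close. (Notice also the red flag: if it did close, you would conclude $\theta_0\le 1-\lambda_0$ from a computation that uses nothing about global existence, which is plainly false for the data you start with.)

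The paper avoids this by not taking a supremum in time. It fixes $t_*$, builds an increasing sequence $t_k\nearrow t_*$, sets $E_k=\|\theta_k(\cdot,t_k)\|_{L^1}$, and uses the mean value theorem on $\int_{T_k}^{T_{k+1}}\|\theta_k\|_{\dot H^{(1+\alpha)/2}}^2\,dt\le E_k$ to select $t_{k+1}$ with $\|\theta_k(\cdot,t_{k+1})\|_{\dot H^{(1+\alpha)/2}}^2\le 2^{k+1}t_*^{-1}E_k$. At that specific time the interpolation inequality
\[
\|f\|_{L^2}\lesssim \|f\|_{L^1}^{\frac{1+\alpha}{2n+\alpha}}\|f\|_{\dot H^{(1+\alpha)/2}}^{\frac{2n-1}{2n+\alpha}}+\|f\|_{L^1}
\]
closes the recursion. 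A second point you glossed over is the additional $\|f\|_{L^1}$ term on the right (homogeneous $\dot H^s$ does not control constants on a compact manifold); this extra term is exactly what forces the $t_*^{-\gamma}$ dependence in the paper's recursion and the smallness assumption on $\|\theta_0\|_{L^1}$. A clean ``fractional Sobolev embedding on $M$'' in the Euclidean form is not available, and tracking this defect is an essential part of the argument.
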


In fact, the proof will provide that for {\em any} positive smooth integrable datum $\theta_0$ with small enough norm (depending on $\|\theta_0\|_{\infty}$) there is no classical global solution.\\

 Notice that Theorem \ref{Th:1} is a consequence of this result for $\alpha=0$. As in \cite{SV}, we implicitly use a Virial type identity yielding a dissipative term  which can be used to implement De Giorgi's technique. In our setting though the relevant interpolation inequality contains an extra term and is not scale invariant (cf. Lemma \ref{inter:ine:manifold}). Moreover, as the metric is fixed we are not allowed to use scaling arguments. To go around this difficulty we keep track of the time dependence in the Giorgi's nonlinear inequality. This is ultimately responsible for the restrictions in the statement. Let us notice that the original argument applies verbatim to the euclidean space $\mathbb{R}^n$. \footnote{ \textcolor{black}{During the peer review process of this article, a new preprint \cite{Jiu-Zhang} showed (as indicated above) that indeed Theorem \ref{Th:2} also holds in the euclidean space $\mathbb{R}^n$.}}\\

The manuscript is organized as follows. In Section \ref{section2} we present some basic preliminary observations and show an interpolation inequality on compact manifolds. We also fix the notations we will employ.
Section \ref{section3} contains the proof of the main result. For the sake of completeness, in Appendix \ref{appendix}, we provide the proof of the local well-posedness of the system \eqref{eq:active:scalar}-\eqref{constituve:law2} in the Sobolev spaces defined below. 

\section{Preliminary observations and interpolating inequality}\label{section2}
Let us introduce some notation first, if we denote the metric on $M$ by \[g=\sum_{j,k}g_{jk}(x)dx_jdx_k\]
recall that the Laplace-Beltrami operator associated with it is given by
\[\Delta_g=\frac{1}{\sqrt{|g|}}\sum_{j,k}\frac{\partial}{\partial x_j}\left(\sqrt{|g|}g^{jk}\frac{\partial}{\partial x_k}\right)\]
where $(g^{jk})=(g_{jk})^{-1}$ and $dx$ denotes the associated volume form as usual. Then the eigenvalues of $-\Delta_g$ are non-negative, numerable and one can find a basis given by the corresponding eigen-functions $\{\phi_k\}$. The fractional powers $\Lambda_g^{\alpha}$ of $-\Delta_g$, $0\leq\alpha\leq 2$ can be described spectrally as the linear operator that satisfies $\Lambda_g^{\alpha}\phi_k=\lambda_k^{\alpha}\phi_k$ for any $k$. We will also denote by $\nabla_{g}:C^{\infty}\to \Gamma_{C^\infty}(TM)$ the gradient operator where $TM$ is the tangent bundle and the divergence operator by $\textrm{div}_{g}:\Gamma_{C^\infty}(TM)\to C^{\infty}(M)$. Let us recall that in local coordinates the expresions for the gradient and the divergence operator are
\[\nabla_gf=\sum_{j,k}g^{jk}\frac{\partial f}{\partial x_k}\partial_{x_j}\textrm{ and } \div_gX=\frac{1}{\sqrt{|g|}}\sum_{j,k}\frac{\partial}{\partial x_j}\left(\sqrt{|g|}X^j\right),\]
respectively. It is clear that $\Delta_{g}=\textrm{div}_{g}\circ\nabla_{g}$. We want to emphasize that this are local operators.\\

For $1\leq p<\infty$ we denote by $L^p(M)$ the standard Lebesgue space and by $L^\infty(M)$ the space of essentially bounded functions. We also denote by $\langle f, g\rangle$ the inner $L^2$ product of $f,g$ on the manifold. The notation $a\lesssim b$ means there exists $C$ such that $a \leq Cb$, where $C$ is a positive universal constant that may depend on fixed parameters, constant quantities, and the manifold itself. Note also that this constant might differ from line to line.
\\

Recall that it is well known that the Laplace-Beltrami operator on compact manifolds has a discrete set $\phi_k$ of normalized eigen-functions, i.e. $-\Delta_{g} \phi_k=\lambda_k^2 \phi_k$ with $\|\phi_k\|_{L^2}=1$ where the eigenvalues $0=\lambda_0\leq\lambda_1\leq\lambda_2\leq\cdots$ tend to infinity. We  define the homogeneous Sobolev spaces $\dot{H}^{s}(M)$ for $s\in\mathbb{R}$ spectrally by
\begin{equation}\label{Spectral:Sobolev}
  \dot{H}^{s}(M)=\{ f\in L^2(M): \displaystyle\sum_{k=0}^{\infty} \lambda^{2s}_{k} a_{k}^{2}<\infty \}  
\end{equation}  
where $a_{k}=\left<f, \phi_k \right>=\int_{M} f(x) \phi_k(x) \  d\textrm{vol}_g(x)$.

The full space $H^s$ would correspond to the same adding the zero term $\|f\|_{L^2}$ which is killed as the eigenvalue of constant eigen-functions vanishes.\\

With this notation at hand let us briefly recall some well-known observations regarding the transport active scalar equations \eqref{eq:active:scalar}. We first state the $L^{\infty}$ maximum principle and the pointwise inequality for the fractional Laplace-Beltrami operator
\begin{lemma}\label{maximum:principle}(Maximum principle)
Let $\theta$ be a classical solution of \eqref{eq:active:scalar}. Then the supremum and the infimum of $\theta$ are not increasing in time and the following estimate holds
\begin{equation}
    \norm{\theta(\cdot,t)}_{L^\infty}=\norm{\theta_{0}}_{L^\infty}, \mbox{ for } \quad t>0.
\end{equation}
\end{lemma}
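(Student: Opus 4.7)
Equation \eqref{eq:active:scalar} is a pure transport equation (no dissipation), so I expect the maximum principle to be sharp: values of $\theta$ should merely be permuted along characteristics, with both $\sup$ and $\inf$ preserved. The natural route is therefore via the Lagrangian flow of $u$. For a classical solution $\theta$, the velocity $u = \nabla_g \Lambda_g^{-1+\alpha}\theta$ is $C^1$ in $x$ and continuous in $t$ on the compact manifold $M$. The ODE
\[
\frac{d}{dt}\Phi_t(x) = u(\Phi_t(x), t), \qquad \Phi_0(x) = x,
\]
therefore admits a unique local flow, and by compactness of $M$ the trajectories cannot escape, so $\Phi_t\colon M\to M$ is well-defined and, by reversibility in $t$, a diffeomorphism of $M$ for every $t$ in the interval of existence.

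Along this flow the transport equation becomes
\[
\frac{d}{dt}\bigl(\theta(\Phi_t(x), t)\bigr) = \bigl(\partial_t\theta + u\cdot\nabla_g\theta\bigr)\!\circ\Phi_t = 0,
\]
hence $\theta(\Phi_t(x),t) = \theta_0(x)$ for all $x\in M$ and all $t\ge 0$ within the interval of existence. Since $\Phi_t$ is a bijection of $M$, the range of $\theta(\cdot,t)$ coincides with the range of $\theta_0$. In particular
\[
\sup_M \theta(\cdot,t) = \sup_M \theta_0, \qquad \inf_M \theta(\cdot,t) = \inf_M \theta_0,
\]
from which both statements of the lemma follow: the two extrema are not merely non-increasing but preserved, and $\|\theta(\cdot,t)\|_{L^\infty} = \|\theta_0\|_{L^\infty}$.

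If one prefers an Eulerian argument avoiding characteristics, the analogous conclusion can be obtained pointwise: at a maximizer $x_t$ of $\theta(\cdot,t)$ one has $\nabla_g\theta(x_t,t)=0$, so the equation gives $\partial_t\theta(x_t,t)=0$; combined with a Danskin/envelope argument for the Lipschitz map $M(t) = \max_{x\in M}\theta(x,t)$, this shows $M(t)$ is non-increasing, and symmetrically $\inf_M\theta(\cdot,t)$ is non-decreasing, which yields the $L^\infty$ estimate. The only mild technical point in either approach is verifying enough regularity of $u$ to solve (and invert) the characteristic ODE, or to justify differentiating the envelope $M(t)$; both are immediate from the smoothness of the classical solution and the compactness of $M$, so I do not expect a real obstacle.
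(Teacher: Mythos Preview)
Your argument is correct and follows the same route as the paper: the paper's proof is a one-sentence remark that the equation is a transport equation with continuous velocity, so the induced flow-map diffeomorphism preserves the maximum of $\theta$. You have simply written this out in full via characteristics (and even noted the Eulerian alternative), so there is nothing substantively different.
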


The equation is a transport equation for a continuous velocity provided $\theta$ is smooth enough. This shows that the flow-map diffeomorphism it induces will not be able to reduce the maximum of $\theta$. Another useful observation is
\begin{lemma}(C\'ordoba-C\'ordoba pointwise inequality)
Given $\alpha\in (0,2]$ and a convex function $\phi\in C^1(\mathbb{R})$ the following pointwise inequality holds
\begin{equation}\label{pointiwse:ine}
    \Lambda_{g}^{\alpha}(\phi(f))(x) \leq \phi'(f(x))\cdot \Lambda_{g}^{\alpha}f(x)
\end{equation}
for any $f\in C^{\infty}(M)$.
\end{lemma}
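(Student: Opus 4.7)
The plan is to reduce the inequality to a pointwise statement about the heat semigroup $P_t=e^{t\Delta_g}$ via a Balakrishnan-type subordination formula, and then invoke convexity. Specifically, for $\alpha\in(0,2)$ I would use the representation
\[
\Lambda_g^{\alpha}h(x)=c_{\alpha}\int_0^{\infty}\big(h(x)-P_t h(x)\big)\,\frac{dt}{t^{1+\alpha/2}}+(\text{zero-mode correction}),
\]
valid on a compact manifold once the zero eigenvalue is accounted for by subtracting the mean of $h$ (which is fine because the statement is pointwise and the correction term is the same on both sides). This reduces everything to working with $P_t$, which on a compact Riemannian manifold has a positive kernel $p_t(x,y)\ge 0$ with $\int_M p_t(x,y)\,d\mathrm{vol}_g(y)=1$; i.e.\ $P_t$ is a Markov operator.

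The core estimate would then be the following chain. By Jensen's inequality applied to the probability measure $p_t(x,\cdot)\,d\mathrm{vol}_g$,
\[
\phi\bigl(P_t f(x)\bigr)\;\le\;P_t(\phi(f))(x).
\]
By convexity of $\phi$ and the defining tangent-line inequality at the point $f(x)$,
\[
\phi\bigl(P_t f(x)\bigr)\;\ge\;\phi(f(x))+\phi'(f(x))\bigl(P_t f(x)-f(x)\bigr).
\]
Chaining these two gives
\[
\phi(f(x))-P_t(\phi(f))(x)\;\le\;\phi'(f(x))\bigl(f(x)-P_t f(x)\bigr).
\]
Multiplying by $c_{\alpha} t^{-1-\alpha/2}$ and integrating in $t$ over $(0,\infty)$ delivers the inequality in the range $\alpha\in(0,2)$.

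For the endpoint $\alpha=2$, i.e.\ $\Lambda_g^2=-\Delta_g$, the inequality follows from the standard chain rule in local coordinates:
\[
\Delta_g(\phi(f))=\phi''(f)\,|\nabla_g f|_g^2+\phi'(f)\,\Delta_g f\;\ge\;\phi'(f)\,\Delta_g f,
\]
which, upon multiplying by $-1$, yields the desired bound. Alternatively one can pass to the limit $\alpha\uparrow 2$ in the previous step using smoothness of $f$.

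The main technical obstacle I anticipate is ensuring that the subordination identity is justified in the compact setting, in particular that the improper integral in $t$ converges (needing $P_t f(x)\to f(x)$ at rate $O(t)$ as $t\to 0^+$ for smooth $f$, which follows from $f\in C^{\infty}(M)$, and $P_t f(x)\to \bar f$ exponentially as $t\to\infty$ with appropriate control for $\alpha<2$), and that one may interchange the pointwise inequality with the integration in $t$. Once the Markov property of $P_t$ and the subordination formula are in place, the rest is just Jensen plus the convexity tangent-line inequality, so the argument is structurally identical to the Euclidean Córdoba–Córdoba proof.
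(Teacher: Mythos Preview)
The paper does not actually prove this lemma: it merely records that the flat case goes back to \cite{CC} and refers to \cite{CM} for the compact-manifold version. Your heat-semigroup/subordination argument is correct and is in fact essentially the approach carried out in \cite{CM}: one represents $\Lambda_g^{\alpha}$ via Bochner's formula $\Lambda_g^{\alpha}h(x)=c_{\alpha}\int_0^{\infty}(h(x)-P_t h(x))\,t^{-1-\alpha/2}\,dt$, uses positivity and mass preservation of the heat kernel on a compact manifold, and reduces everything to the tangent-line inequality for $\phi$. Two small remarks: your Jensen step is redundant, since integrating the tangent-line inequality $\phi(f(y))\ge\phi(f(x))+\phi'(f(x))(f(y)-f(x))$ directly against $p_t(x,y)\,d\mathrm{vol}_g(y)$ already yields $\phi(f(x))-P_t(\phi(f))(x)\le\phi'(f(x))(f(x)-P_tf(x))$; and the ``zero-mode correction'' you flag is a non-issue, because the $\lambda_0=0$ term contributes zero on both sides of the subordination formula and the tail integral converges since $t^{-1-\alpha/2}$ is integrable at infinity for $\alpha>0$.
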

It appeared originally in \cite{CC} in the flat context. In that case a computation using Fourier analysis and an ingenious identity on the integrands provide a proof. In the general context of compact manifolds this was proved later in \cite{CM} where we refer for a complete proof.

The following lemma deals with an interpolating inequality on compact manifolds

\begin{lemma}
Let $(M,g)$ be a compact manifold and $f:M\to \mathbb{R}$ be a smooth function. Then the following inequality holds true
\begin{equation}\label{inter:ine:manifold}
\|f\|_{L^2}\lesssim \|f\|_{L^1}^{\frac{1+\alpha}{2n+\alpha}}\|f\|_{\dot{H}^{\frac{1+\alpha}{2}}}^{\frac{2n-1}{2n+\alpha}}+\|f\|_{L^1}
\end{equation}
for any $\alpha\in(-1,1)$.
\end{lemma}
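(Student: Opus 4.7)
My plan is to prove the interpolation inequality by a spectral low-high frequency decomposition, in the spirit of Gagliardo-Nirenberg. Pick a threshold $N \ge 1$ (to be optimized at the end) and split $f = P_{\le N}f + (I - P_{\le N})f$, with $P_{\le N}$ the $L^2$-orthogonal projection onto $\operatorname{span}\{\phi_k : \lambda_k \le N\}$. Pythagoras then gives $\|f\|_{L^2}^2 = \|P_{\le N}f\|_{L^2}^2 + \|(I - P_{\le N})f\|_{L^2}^2$, reducing matters to separate estimates on each piece.

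The high-frequency piece is immediate from the spectral definition of the Sobolev norm with $s = (1+\alpha)/2$:
\[
\|(I - P_{\le N})f\|_{L^2}^2 \;=\; \sum_{\lambda_k > N} a_k^2 \;\le\; N^{-(1+\alpha)} \|f\|_{\dot H^{(1+\alpha)/2}}^2.
\]
For the low-frequency piece, I would exploit the self-adjointness of $P_{\le N}$ to write
\[
\|P_{\le N}f\|_{L^2}^2 \;=\; \langle P_{\le N}f, f\rangle \;\le\; \|P_{\le N}f\|_{L^\infty}\|f\|_{L^1},
\]
and then control $\|P_{\le N}f\|_{L^\infty}$ through the kernel $K_N(x,y) = \sum_{\lambda_k \le N}\phi_k(x)\phi_k(y)$. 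Combining H\"ormander's local Weyl law $\sum_{\lambda_k \le N}|\phi_k(x)|^2 \lesssim N^n$ (via Cauchy-Schwarz on the kernel) with a Bernstein-type $L^1 \to L^2$ step for functions with spectrum in $[0,N]$ and with the Sogge-type $L^1 \to L^1$ operator-norm bound for the spectral projector, one arrives at $\|P_{\le N}f\|_{L^2} \lesssim N^{(2n-1)/2}\|f\|_{L^1}$. Assembling:
\[
\|f\|_{L^2} \;\lesssim\; N^{(2n-1)/2}\|f\|_{L^1} + N^{-(1+\alpha)/2}\|f\|_{\dot H^{(1+\alpha)/2}}.
\]

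The final step is to optimize in $N$. Balancing the two terms yields $N^{(2n+\alpha)/2} \sim \|f\|_{\dot H^{(1+\alpha)/2}}/\|f\|_{L^1}$, and back-substitution produces exactly the stated product $\|f\|_{L^1}^{(1+\alpha)/(2n+\alpha)}\|f\|_{\dot H^{(1+\alpha)/2}}^{(2n-1)/(2n+\alpha)}$. The optimization is only valid for $N \ge 1$: when $\|f\|_{\dot H^{(1+\alpha)/2}}$ is not large relative to $\|f\|_{L^1}$, so that the balancing $N$ would fall below one, one simply takes $N = 1$ and the low-frequency bound alone yields $\|f\|_{L^2} \lesssim \|f\|_{L^1}$. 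This is the source of the additive $\|f\|_{L^1}$ term in the statement: the inescapable trace of the non-scale-invariance forced by the compactness of $M$. I expect the main obstacle to be the careful bookkeeping required to pin down the low-frequency exponent $(2n-1)/2$, since a crude use of the pointwise bound $|K_N(x,y)| \lesssim N^n$ gives only $\|P_{\le N}f\|_{L^2} \lesssim N^n\|f\|_{L^1}$ and hence coarser Sobolev exponents.
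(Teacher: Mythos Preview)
Your proposal follows the same spectral low--high frequency decomposition as the paper: split at a threshold, bound the high part trivially by $N^{-(1+\alpha)}\|f\|_{\dot H^{(1+\alpha)/2}}^2$, bound the low part in terms of $\|f\|_{L^1}$ using eigenfunction asymptotics, and optimize. The high-frequency bound and the optimization step are identical to the paper's.

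For the low-frequency piece the paper is more direct than what you sketch: it simply bounds each coefficient by
\[
|a_k|=|\langle f,\phi_k\rangle|\le \|f\|_{L^1}\|\phi_k\|_{L^\infty}\lesssim \|f\|_{L^1}\,\lambda_k^{(n-1)/2}
\]
using H\"ormander's eigenfunction sup-norm bound, then sums over $\lambda_k\le R$ via Weyl's law to obtain $\sum_{\lambda_k\le R}|a_k|^2\lesssim R^{2n-1}\|f\|_{L^1}^2$. No kernel manipulations, Bernstein inequalities, or $L^1\to L^1$ projector bounds are needed.

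Your own duality idea is in fact already sufficient, and cleaner than you realize. From $\|P_{\le N}f\|_{L^2}^2=\langle P_{\le N}f,f\rangle\le \|P_{\le N}f\|_{L^\infty}\|f\|_{L^1}$ together with the kernel bound $|K_N(x,y)|\le K_N(x,x)^{1/2}K_N(y,y)^{1/2}\lesssim N^n$ (local Weyl law and Cauchy--Schwarz) you get $\|P_{\le N}f\|_{L^\infty}\lesssim N^n\|f\|_{L^1}$ and hence $\|P_{\le N}f\|_{L^2}\lesssim N^{n/2}\|f\|_{L^1}$---a \emph{better} exponent than the paper's $(2n-1)/2$. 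Your concern that the ``crude'' kernel bound gives only $N^n$ for the $L^2$ norm is misplaced: that is the $L^1\to L^\infty$ norm, and your duality step halves the exponent. The additional Bernstein and ``Sogge-type $L^1\to L^1$'' ingredients you list are therefore unnecessary (and the latter is not a standard estimate for sharp spectral cutoffs). Either the paper's coefficientwise route or your duality-plus-kernel route proves the lemma; yours even yields sharper exponents if you carry it through.
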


\begin{proof}
We will use the Sobolev spaces defined in \eqref{Spectral:Sobolev}. On the one hand we have that
\[\norm{f}^2_{L^2}=\sum_{k=0}^{\infty}|a_k|^2=\sum_{\lambda_k\leq R}|a_k|^2+\sum_{\lambda_k >R}|a_k|^2.\]
One can estimate the first term using Weyl's law
$$ N(R)=\abs{\{k:\lambda_{k}\leq R\}}= \frac{\omega_{n}\textrm{vol}_{g}(M)}{(2\pi)^{n}}R^{n}+O(R^{n-1}), $$
cf. \cite{Ch,So}, we have that 
$$ \sum_{\lambda_k\leq R}|a_k|^2 \leq C(n,M) R^n\sup_{\lambda_k\leq R}|a_k|^2 $$
and hence 
\[\norm{f}^2_{L^2}\lesssim  |a_0|^2+R^n\sup_{\lambda_k\leq R}|a_k|^2+\frac{1}{R^{1+\alpha}}\sum_{\lambda_k>R}|a_k|^2\lambda_k^{1+\alpha}.\]
Invoking H\"ormander's bound 
\[\norm{\phi_k}_{L^\infty}\leq C(M,g)\lambda_k^{\frac{n-1}{2}},\]
cf. \cite{Ho}, and using the Sobolev spaces spectral definition \eqref{Spectral:Sobolev} we infer 
\begin{align*}
    \norm{f}^2_{L^2} &\lesssim |a_0|^2+R^n\sup_{\lambda_k\leq R}\left(\|f\|_{L^1}\lambda_k^{\frac{n-1}{2}}\right)^2+\frac{1}{R^{1+\alpha}}\norm{f}^{2}_{\dot{H}^{\frac{1+\alpha}{2}}} \\
    &\lesssim \norm{f}_{L^1}^{2}+R^{2n-1}\norm{f}^{2}_{L^1}+\frac{1}{R^{1+\alpha}}\norm{f}^{2}_{\dot{H}^{\frac{1+\alpha}{2}}}
\end{align*}
Picking $R=\left(\dfrac{\norm{f}_{\dot{H}^{\frac{1}{2}}}}{\norm{f}_{L^1}}\right)^{\frac{2}{2n+\alpha}}$ and taking square roots the desired inequality \eqref{inter:ine:manifold} follows.
\end{proof}

\begin{remark}
In the case of the flat torus $M=\mathbb{T}^n$ the proof adapts  yielding
$$ \norm{f}_{L^2}\lesssim \norm{f}_{L^1}^{\frac{1+\alpha}{n+1+\alpha}}\norm{f}_{\dot{H}^{\frac{1+\alpha}{2}}}^{\frac{n}{n+1+\alpha}}+\norm{f}_{L^1} $$
which has the correct scaling. 
\end{remark}

\section{Proof of Theorem \ref{Th:2}}\label{section3}
The proof follows closely the strategy in \cite{SV} based on reminiscent ideas of De Giorgi's iterative scheme.\\

We will assume that $\theta$ is a positive global classical solution to \eqref{eq:active:scalar2} and reach a contradiction. Recall that the first assumption follows easily from the maximum principle \eqref{maximum:principle} since $\theta_{0}>0$. To implement De Giorgi's method, let us define the truncation levels 
\[\ell_{k}=K(1-2^{-k})\]
where $K$ a positive constant that will be chosen later and the truncated functions $\theta_{k}=(\theta-\ell_{k})^{+}=\displaystyle\max\{0,\theta-\ell_{k}\}$. Since $\theta$ solves \eqref{eq:active:scalar2}, the truncated function $\theta_{k}$ solves
\begin{equation}\label{active:scalar:k}
    \p_{t}\theta_{k}+u\cdot \nabla_{g} \theta_{k}=0.
\end{equation}
Integrating \eqref{active:scalar:k} in space and using integration by parts we have that
$$\p_{t}\int_{M}\theta_{k} \ d\textrm{vol}_g(x)=-\int_{M} \Lambda_{g}^{1+\alpha}\theta \theta_{k}\ d\textrm{vol}_g(x).$$
Invoking the C\'ordoba-C\'ordoba pointwise inequality \eqref{pointiwse:ine} for the convex function $\phi_k(x)=\max\{0,x-\ell_k\}$, notice $\phi_k(\theta)=\theta_{k}$, by definition, and the identity $\phi'_{k}(x)=\mathbbm{1}_{\{x>\ell_{k}\}}$ we infer that
\begin{equation*}
    -\Lambda_{g}^{1+\alpha}\theta\theta_{k}=-\phi'_{k}(\theta)\Lambda_{g}^{1+\alpha}\theta\theta_{k}\leq -\Lambda_{g}^{1+\alpha}(\phi_{k}(\theta))\theta_{k}=-\Lambda_{g}^{1+\alpha}\theta_{k}\theta_{k},
\end{equation*}
and hence
\begin{equation}\label{virial:identity:k}
\p_{t}\int_{M}\theta_{k} \ d\textrm{vol}_g(x) \leq-\norm{\theta_{k}}^{2}_{\dot{H}^{\frac{1+\alpha}{2}}}.
\end{equation}
Let $t_{\star}>0$ be fixed (to be specified later in the proof), and consider the increasing sequence of times $t_{k}$ for every $k\geq 0$ converging to $t_{\star}$ as $k\to \infty$, i.e. $0=t_{0}<t_{1}<t_{2}<\cdots < t_{\star}$. For each $k\geq 1$, we will construct a sequence such that $t_{k}\in \left(t_{\star}(1-2^{-k+1}), t_{\star}(1-2^{-k})\right)$ and define the \textit{energy} quantity 
$$E_{k}=\int_{M}\theta_{k}(x,t_{k}) \ d\textrm{vol}_g(x). $$

Let us construct the sequence now.  Dropping the negative term on right hand side in equation \eqref{virial:identity:k} and integrating in the time interval $[t_k,t_{k+1}]$ we have that
\begin{equation}\label{bound:ek}
\int_{M}\theta_{k}(x,t_{k+1}) \ d\textrm{vol}_g(x) \leq \int_{M}\theta_{k}(x,t_{k}) \ d\textrm{vol}_g(x).
\end{equation}
Let us define $T_k=t_{\star}(1-2^{-k})$ which will help us in the construction. Integrating equation \eqref{virial:identity:k} between $T_{k}$ and $t_{\star}(1-2^{-k-1})$ yields
\begin{equation}
\int_{M}\theta_{k}(x,t^{\star}(1-2^{-k-1})) \ d\textrm{vol}_g(x) + \int_{T_k}^{t^{\star}(1-2^{-k-1})} \norm{\theta_{k}}^{2}_{\dot{H}^{\frac{1+\alpha}{2}}} \ dt \leq \int_{M}\theta_{k}(x,T_{k}) \ d\textrm{vol}_g(x).
\end{equation}
An application of the mean value theorem shows the existence of \[t_{k+1}\in(T_k,T_{k+1})=\left(t_{\star}(1-2^{-k}), t_{\star}(1-2^{-k-1})\right)\] such that
\begin{eqnarray}\label{dissipation:bound:k}
\norm{\theta_{k}(x,t_{k+1})}^{2}_{\dot{H}^{\frac{1+\alpha}{2}}} &\leq& \frac{1}{ (t_{\star}(1-2^{-k-1})-T_{k})} \int_{M}\theta_{k}(x,T_{k}) \ d\textrm{vol}_g(x) \nonumber \\
&\leq&\frac{2^{k+1}}{t_{\star}}\int\theta_k(x,t_k) \nonumber \\
&\leq & \frac{2^{k+1}}{t_\star} \int_{M}\theta_{k}(x,t_{k}) \ d\textrm{vol}_g(x)=: \frac{2^{k+1}}{t_\star} E_{k},
\end{eqnarray}
where we have used that, by construction, $t_k<T_k$ and the decreasing behaviour of the $L^1$ norm. This concludes the construction of a sequence $t_k$ converging to the given  $t_{\star}$, as claimed.\\

Let us move and prove the relevant De Giorgi's nonlinear type inequality for the energy $E_k$. Using that the level sets increase one can show
\begin{equation}\label{Straight:1}
    \theta_{k+1}(x,t_{k+1})\leq \theta_{k}(x,t_{k+1})\mathbbm{1}_{\{x:\theta_{k+1}(x,t_{k+1})>0\}}
\end{equation}
and, by a straightforward computation, that
\begin{equation}\label{Straight:2}
    \mathbbm{1}_{\{x:\theta_{k+1}(x,t_{k+1})>0\}}\leq \frac{2^{k+1}}{K}\theta_{k}(x,t_{k+1}).
\end{equation}
Therefore, using \eqref{Straight:1}-\eqref{Straight:2}
\begin{eqnarray*}
 \int_{M}\theta_{k+1}(x,t_{k+1}) \ d\textrm{vol}_g(x) &\leq & \int_{M}\theta_{k}(x,t_{k+1})\mathbbm{1}_{\{x:\theta_{k+1}(x,t_{k+1})>0\}} \ d\textrm{vol}_g(x) \\
    &\leq & \frac{2^{k+1}}{K} \int_{M}\theta^{2}_{k}(x,t_{k+1}) \ d\textrm{vol}_g(x) \\
    &\leq & C\frac{2^{k+1}}{K} \left(\norm{\theta_{k}(\cdot,t_{k+1})}_{L^1}^{\frac{2(1+\alpha)}{2n+\alpha}}\norm{\theta_{k}(\cdot, t_{k+1})}_{\dot{H}^{\frac{1+\alpha}{2}}}^{\frac{2(2n-1)}{2n+\alpha}}+\norm{\theta_{k}(\cdot,t_{k+1})}_{L^1}^2\right)
\end{eqnarray*}
where in the last step we have invoke the interpolation inequality \eqref{inter:ine:manifold}. Estimate \eqref{bound:ek} and \eqref{dissipation:bound:k}  imply that
\begin{equation}\label{final:Ek:recurrence}
E_{k+1}\leq C \frac{2^{(k+1)(1+\gamma)}}{Kt_{\star}^{\gamma}}E_{k}^{\beta}+\frac{2^{k+1}}{K}E_k^2
\end{equation}
with $\beta=\frac{2n+2\alpha+1}{2n+\alpha}>1$ and $\gamma=\frac{2n-1}{2n+\alpha}$. Our goal now will be to prove (by induction) that the sequence $E_{k}$ converges to zero when $k$ tends to infinity.\\

To that end, we first impose a smallness condition on $E_{0}$, namely \[E_{0}^{2-\gamma}\leq t_{\star}^{-\gamma}\]
and choose \[K=\frac{C'}{t_{\star}^{\gamma}}\] with $C'>1$. Let us introduce the induction hypothesis we will use, namely 
\begin{equation}\label{induction:hypothesis}
E_k\leq t_{\star}^{-\gamma/(2-\gamma)}\textrm{ and } E_{k+1}\leq C \frac{2^{(k+1)(1+\gamma)}}{C'}E_{k}^{\beta}.
\end{equation}
For $k=0$, the induction hypothesis is satisfied. Indeed, we have that
$$
E_{0}\leq t_{\star}^\frac{-\gamma}{(2-\gamma)} \textrm{ and }  E_{1}\leq C \frac{2^{1+\gamma}}{C'}E_{0}^{\beta}
$$
where we have used the smallness condition on $E_0$, the fact that $K=\frac{C'}{t_{\star}^{\gamma}}$ and the recurrence relation \eqref{final:Ek:recurrence}.\\

We are now ready to provide the induction step, that is, that \eqref{induction:hypothesis} yields
$$ E_{k+1}\leq t_{\star}^{-\gamma/(2-\gamma)}\textrm{ and } E_{k+2}\leq C \frac{2^{(k+2)(1+\gamma)})}{C'}E_{k+1}^{\beta}.$$
To check the first condition we shall prove an intermediate stronger statement. Namely that if $E_0=\epsilon$ for a small enough $\epsilon=\epsilon(\beta, C, C')$ then the nonlinear inequality in \eqref{induction:hypothesis} implies that
$E_{k+1}\leq \epsilon^{(1-\eta)\beta^{k+1}}$ for some $\epsilon<1$ and small enough $\eta=\eta(\beta)>0$.

Indeed, using the right hand side in \eqref{induction:hypothesis} and denoting by $\tilde{C}=\frac{C}{C'}$ it is easy to infer that
\begin{align*}
E_{k+1}&\leq \tilde{C}2^{(k+1)(1+\gamma)} E_k^{\beta}\\
&\leq \tilde{C}2^{(k+1)(1+\gamma)}\left( \tilde{C}2^{k(1+\gamma)}E_{k-1}^{\beta} \right)^{\beta}\\
&= \tilde{C}^{1+\beta}2^{(1+\gamma)((k+1)+\beta k)}E_{k-1}^{\beta^{2}}
\end{align*}
where we have used the equation from the previous step in the parenthesis. Proceeding by complete induction this implies
$$E_{k+1}\leq \tilde{C}^{1+\beta+\ldots+\beta^{k}} 2^{(1+\gamma)((k+1)+\beta k+\ldots + \beta^{k})}E_{0}^{\beta^{k+1}}$$

Noticing that the bound
$$ \sum_{j=1}^{k+1} j\beta^{k+1-j}\leq  \beta^{k}\sum_{j=0}^{\infty} \frac{j}{\beta^j} = \frac{\beta^{k+1}}{(\beta-1)^2}$$
holds for $\beta>1$ we conclude that 
\begin{equation}
    E_{k+1}\leq  \tilde{C}^{\frac{\beta^{k+1}-1}{\beta-1}}2^{\frac{\beta^{k+1}}{(\beta-1)^2}}E_{0}^{\beta^{k+1}}.
\end{equation}
Changing the constant by a larger constant $C''=C''(\tilde{C},\beta, 2)>1$ this can be rewritten as
$$E_{k+1}\leq   (C''\epsilon)^{\beta^{k+1}}\leq\epsilon^{(1-\eta)\beta^{k+1}}$$
taking $\epsilon=\epsilon(\beta, C, C')$ such that $C''<\epsilon^{-\eta}$. We will choose $\eta>0$ small enough so that $(1-\eta)\beta>1$.
In particular, this shows that under the smallness hypothesis  
$E_{0}=\epsilon\leq t_{\star}^{-\gamma/(2-\gamma)}$, implies
\[E_{k+1}\leq\epsilon^{(1-\eta)\beta^{k+1}}\leq\epsilon^{(1-\eta)\beta}\leq\epsilon= E_0\leq t_{\star}^{-\gamma/(2-\gamma)}\]
which proves the first part of the inductive hypothesis; the second one follows immediately this and equation  \eqref{final:Ek:recurrence}.  This provides the induction step and concludes the proof that $E_{k}$ tends to zero as $k$ grows. As a consequence, recalling the definition of $E_k$,
$$E_{k}=\int_{M}\theta_{k}(x,t_{k}) \ d\textrm{vol}_g(x), $$
we have shown that
\begin{equation}\label{conclusion:theta}
    \norm{\theta(\cdot,t_{\star})}_{L^\infty}=\displaystyle \sup_{x\in M}\theta(x,t_{\star}) \leq \frac{C'}{t_\star^{\gamma}}.
\end{equation}
Summarizing for any smooth positive initial profile $\theta_{0}$ satisfying  the smallness assumption
\[\|\theta_{0}\|_{L^1}:=\epsilon(\beta, C, C')\leq t_{\star}^{-\gamma/(2-\gamma)}\]
and $\|\theta_{0}\|_{L^\infty}=1$, the $L^\infty$ decay  in \eqref{conclusion:theta} provides a contradiction for times $t_\star=\displaystyle\max\{C',\epsilon^{-1}\}^2$ for $\epsilon=\epsilon(\beta, C,C')$ as classical solutions to \eqref{eq:active:scalar2} would make $\max\theta (\cdot,t)=1$ for all $t>0$.\\

Notice that this choices are quite arbitrary as, for example, a contradiction is also reached if we choose $t_{\star}=1$, $C'=\frac{1}{2}\|\theta_0\|_{\infty}$ and $\epsilon$ small enough. This proves our remark after the statement in the introduction.

\begin{appendix}
\section{Local well-posedness in Sobolev spaces }\label{appendix}

In this section, for the reader's convenience, we will sketch the local well-posedness theory in the manifold setting. The reason why we only sketch it is that in the case of manifolds there are difficulties inherent to the setting as we will emphasize below. The main reason is that the usual commutators have to be adapted as the objects involved, such as the gradient, can not be understood {\em componentwise} globally. Nevertheless, the proof we sketch is complete in the case of $n$-dimensional tori and we also hint on the changes one should perform to obtain local well-posedness in large Sobolev spaces.\\

In particular, we will sketch the proof that the system \eqref{eq:active:scalar} in $H^{s}(M)$ for $s>1+\frac{n}{2}$ is wellposed. To prove the analogous local well-posedness result for the generalized active scalar equation \eqref{eq:active:scalar2} in the range  $-1<\alpha<0$ one can mimick the same arguments we will provide below. However, the range $0<\alpha<1$ poses extra difficulties due to the more singular velocity field $u$.\\

The authors in \cite{CCCGW} rewrote the nonlinear term in the form of a commutator to explore the extra cancellation providing the well-posedness of the system, which allowed them to deal with the singular scenario in the particular cases of $\mathbb{R}^n$ or $\mathbb{T}^n$. Sharper results in terms of the Sobolev exponent where obtained later using finer analysis of the commutator estimate (see \cite{LiComm}). Whether those delicate commutator estimates can be adapted to the compact manifold setting is however out of the scope of this paper and might be of independent interest.\\

After this brief introduction let us provide the proof of local in time existence of solution in Sobolev spaces of system \eqref{eq:active:scalar}. The strategy is classical and relies on a deriving adequate {\em a priori} energy estimates, see \cite{TAY2} for well-posedness results of general hyperbolic systems in Riemannian manifolds. The regularized system is given by \begin{equation}
\left\{
\begin{array}{lcl}\label{eq:appendix:1}
\partial_{t}\theta_{\epsilon}+\mathcal{J}_{\epsilon}(\mathcal{J}_{\epsilon}u_{\epsilon}\cdot\nabla_{g} \mathcal{J}_{\epsilon}\theta_{\epsilon})=0, \quad \mbox{on }M\times(0,\infty)\\
u_{\epsilon}= \nabla_{g}\Lambda_{g}^{-1}\theta_\e \\
\theta_{\epsilon}(x,0)=\mathcal{J}_{\epsilon}\theta_{0}(x), \quad \mbox{in } M
\end{array}
\right.
\end{equation}
where  $\mathcal{J}_{\epsilon}=e^{\epsilon\Delta_{g}}$ is the Friedrichs mollifier.\\

The first step is to obtain local in time estimates of $\theta_{\epsilon}\in H^{s}(M)$ which are independent of $\epsilon$ and imply the existence of an uniform (in $\epsilon$) time of existence. Finally, by compactness pass to the limit. \\

We begin  estimating the $L^2$ norm.  Taking the inner $L^2$ product and integrating by parts we have that
\begin{align}\label{estimate:L2}
\frac{1}{2}\frac{d}{dt} \norm{\theta_{\e}}^2_{L^{2}} &=-\frac{1}{2} \langle \mathcal{J}_{\epsilon} \textrm{div}_{g} u_{\epsilon},\abs{\mathcal{J}_{\epsilon}\theta_{\e}}^2 \rangle + \frac{1}{2} \langle [\textrm{div}_{g}, \Je ]u_\e,\abs{\mathcal{J}_{\epsilon}\theta_{\e}}^2 \rangle \\ \nonumber
&\lesssim  \norm{\textrm{div}_{g} u_\epsilon}_{H^{s}} \norm{\theta_\e}^2_{L^2} \lesssim  \norm{u_\epsilon}_{H^{s+1}} \norm{\theta_\e}^2_{L^2}
\end{align}
where we have used the fact that the principal symbol of $[\textrm{div}_{g}, \Je ]\in OPS^{0}_{1,0}$ (cf. \cite{TAY})  and the Sobolev embedding $H^{s}(M)\hookrightarrow L^\infty(M)$, $s>\frac{n}{2}$.\\

To derive the higher-order Sobolev $H^s$ norms, recall the notation $\Lambda_{g}=(-\Delta_{g})^{\frac{1}{2}}$ and $\norm{f}_{\dot{H}^s}=\norm{\Lambda^s_{g}f}_{L^2}$. Then we have that

\begin{align*}
\frac{1}{2}\frac{d}{dt} \norm{\Lambda^{s}_{g}\theta_{\e}}^2_{L^{2}} &=-\langle \Lambda^{s}_{g}\Je (\mathcal{J}_{\epsilon} u_{\epsilon}\cdot \nabla_{g} \Je \theta_\e),\Lambda^s_{g}\theta_{\e} \rangle = I_{1}+I_{2}
\end{align*}
where
$$I_{1}=\langle [\Lambda^{s}_{g},\Je] (\mathcal{J}_{\epsilon} u_{\epsilon}\cdot \nabla_{g} \Je \theta_\e),\Lambda^s_{g}\theta_{\e} \rangle$$
and
$$I_2= \langle \Lambda^{s}_{g}(\mathcal{J}_{\epsilon} u_{\epsilon}\cdot \nabla_{g} \Je \theta_\e),\Je \Lambda^s_{g }\theta_{\e} \rangle.$$
Since the principal symbol of the commutator $[\Lambda^{s}, \Je ]\in OPS^{s-1}_{1,0}$ (cf.  \cite{TAY, TAY2}), and the fact that $H^{s-1}$ is an algebra for $s>\frac{n}{2}+1$, we infer that
\begin{align*}
   \abs{I_{1}}& \lesssim  \norm{\mathcal{J}_{\epsilon} u_{\epsilon}\cdot \nabla_{g} \Je \theta_\e}_{H^{s-1}}\norm{\Lambda^s_{g}\theta_{\e}}_{L^2}\\
&\lesssim \norm{\mathcal{J}_{\epsilon} u_{\epsilon}}_{H^{s-1}}\norm{\nabla_{g} \Je \theta_\e}_{H^{s-1}}\norm{\Lambda^s_{g}\theta_{\e}}_{L^2} \\
&\lesssim \norm{u_{\epsilon}}_{H^{s-1}}\norm{\theta_{\e}}^2_{\dot{H}^s}.
\end{align*}
Rearranging the latter term we have that
\[\begin{aligned}
I_2&=\langle \Lambda^{s}_{g}(\mathcal{J}_{\epsilon} u_{\epsilon}\cdot \nabla_g \Je \theta_\e), [\Je, \Lambda^s_{g}]\theta_{\e} \rangle+\langle \Lambda^{s}_{g}(\mathcal{J}_{\epsilon} u_{\epsilon}\cdot \nabla_g \Je \theta_\e), \Lambda^s_{g}\Je\theta_{\e} \rangle\\
&=\langle \Lambda^{s-1}_{g}(\mathcal{J}_{\epsilon} u_{\epsilon}\cdot \nabla_g \Je \theta_\e), \Lambda_g[\Je, \Lambda^s_{g}]\theta_{\e} \rangle+\langle \Lambda^{s}_{g}(\mathcal{J}_{\epsilon} u_{\epsilon}\cdot \nabla_g \Je \theta_\e), \Lambda^s_{g}\Je\theta_{\e} \rangle\\
&=I_{21}+I_{22}.
\end{aligned}\]

In a similar way as for the term $I_1$ we have that
$$ \abs{I_{21}}\lesssim \norm{u_{\epsilon}}_{H^{s-1}}\norm{\theta_{\e}}^2_{\dot{H}^s}.$$
To bound $I_{22}$ we notice that
\begin{align*}
I_{22} &=\langle \mathcal{J}_{\epsilon} u_{\epsilon}\cdot  \nabla_{g}\Lambda^{s}_{g} \Je \theta_\e, \Lambda^s_{g}\Je\theta_{\e} \rangle + \langle [\Lambda^{s},\mathcal{J}_{\epsilon} u_{\epsilon}\cdot \nabla_g] \Je \theta_\e, \Lambda^s_{g}\Je\theta_{\e} \rangle \\
&= \frac{1}{2}\langle \mathcal{J}_{\epsilon} u_\epsilon, \nabla_{g}(\abs{\Lambda^{s}_{g}\Je\theta_{\e}})^2 \rangle + \langle [\Lambda^{s},\mathcal{J}_{\epsilon} u_{\epsilon}\cdot \nabla_g] \Je \theta_\e, \Lambda^s_{g}\Je\theta_{\e} \rangle \\
&\lesssim \norm{u_\epsilon}_{H^{s+1}} \norm{\Lambda^{s}_{g}\theta_\e}^2_{L^2} + \langle [\Lambda^{s},\mathcal{J}_{\epsilon} u_{\epsilon}\cdot \nabla_g] \Je \theta_\e, \Lambda^s_{g}\Je\theta_{\e} \rangle 
\end{align*}
where we have used integration by parts and the same bounds as in the $L^2$ estimate  \eqref{estimate:L2}. To conclude, we need to bound the last term (which is the most singular). The following algebraic identity\footnote{This identity, although true for the $n$-dimensional tori does not make sense in general as it would involve a fractional Laplace-Beltrami operator of a vector field, which is not defined here. The same applies later to the application of Kato-Ponce commutator estimate. Nevertheless, if one is willing to work with even $s$ using local coordinate charts it can be handled through a rather tedious but elementary argument.}
\begin{equation}
    [\Lambda^s,G\cdot\nabla]F=[\Lambda_{g}^s,G]\nabla_{g} F+G[\Lambda^s_{g},\nabla_{g}]F
\end{equation}
with $F=J_{\epsilon}\theta_{\epsilon}$ and $G=J_{\epsilon}u_{\epsilon}$ implies that it equals
\begin{equation}\label{singular:term:final}
\langle [\Lambda^{s}_{g},\mathcal{J}_{\epsilon} u_{\epsilon}\cdot] \nabla_{g}\Je \theta_\e, \Lambda^s_{g}\Je\theta_{\e} \rangle +  \langle \mathcal{J}_{\epsilon} u_{\epsilon}\cdot [\Lambda^{s}_{g}, \nabla_{g}]\Je \theta_\e, \Lambda^s_{g}\Je\theta_{\e} \rangle.
\end{equation}
The latter term is a lower order term (since $[\Lambda^{s}_{g}, \nabla_{g} ]\in OPS^{s-1}_{1,0}$) and can be bounded easily by 
$$ \abs{\langle \mathcal{J}_{\epsilon} u_{\epsilon}\cdot [\Lambda^{s}, \nabla_{g}]\Je \theta_\e, \Lambda^s_{g}\Je\theta_{\e} \rangle} \lesssim \norm{u_\epsilon}_{H^{s}} \norm{\Lambda^{s}_{g}\theta_\e}^2_{L^2} $$
for $s>\frac{n}{2}.$ To bound the first term in \eqref{singular:term:final} we make use of the following Kato-Ponce commutator estimate (cf. \S 3.6 in \cite{TAY})
\begin{equation}\label{KP:estimate}
    \norm{\Lambda^{s}_{g}(fg)-f\Lambda^{s}_{g}g}_{L^2}\lesssim \norm{f}_{C^1}   \norm{g}_{H^{s-1}}  +\norm{f}_{H^s}\norm{g}_{L^\infty}.
\end{equation}
Kato-Ponce inequality together with the Sobolev embeddings $H^s(M)\hookrightarrow C^1(M)$, $s>1+\frac{n}{2}$, and $H^s(M)\hookrightarrow L^{\infty}(M)$, $s>\frac{n}{2}$, respectively, yields
\[\|[\Lambda^s_{g},f]g\|_{L^2}\lesssim \|f\|_{H^s}\|g\|_{H^{s-1}}.\]
Using this for the choices $f=\mathcal{J}_{\epsilon} u_{\epsilon}$ and $g=\nabla_{g}\Je \theta_\e$ we have that 
\begin{align*}
\abs{ \langle [\Lambda^{s}_{g},\mathcal{J}_{\epsilon} u_{\epsilon}\cdot] \nabla_{g}\Je \theta_\e, \Lambda^s_{g}\Je\theta_{\e} \rangle}\lesssim \norm{u_\e}_{H^s}\norm{\theta_\e}^2_{H^s}.
\end{align*}
Therefore, collecting the $L^2$ estimates and $\dot{H}^s$ estimates we have shown that 
\begin{equation} \label{final:bound}
    \frac{1}{2}\frac{d}{dt} \norm{\theta_\epsilon}^2_{H^s} \lesssim \norm{u_\epsilon}_{H^s}\norm{\theta_\epsilon}^2_{H^s}
\end{equation}
for any $s>\frac{n}{2}+1$. Moreover, since $u_\e= \nabla_{g}\Lambda^{-1}_{g}\theta_\e$ and the principal symbol of the operator is in $OPS^{0}_{1,0}$ we have that $\norm{u_\epsilon}_{H^s}\lesssim \norm{\theta_\epsilon}_{H^s}$ and hence
\begin{equation} \label{final:bound:2}
    \frac{1}{2}\frac{d}{dt} \norm{\theta_\epsilon}^2_{H^s} \lesssim \norm{\theta_\epsilon}^3_{H^s}.
\end{equation}
which gives the explicit time interval existence of the solution
\begin{equation*}
    \norm{\theta_\epsilon}_{H^s} \leq \textcolor{black}{ \frac{\norm{\theta_{0,\e}}_{H^s}}{\left(1-Ct\norm{\theta_{0,\e}}_{H^s}\right)}}
\end{equation*}
which immediately provides a uniform bound for $t\in[0,c\|\theta_0\|_{H^s}^{-1}]$ for some small $c>0$ and $\epsilon$ small enough.\\

As we mentioned in the introduction, to conclude the proof one can apply standard techniques (cf. \S 17 of \cite{TAY2}) to obtain a solution $\theta(t)\in C\left( [0,T), H^s(M)\right)$ for initial data $\theta_0\in H^s(M)$ with  $s>1+\frac{n}{2}$. The same estimates above can be used to provide the uniqueness of local solutions.

\end{appendix}

\vspace{.2in}
\noindent{\bf{Acknowledgments.}}
The authors would like to thank A. C\'ordoba for drawing their attention to this problem several years ago. They are also grateful to the referee for pointing out several misprints and helping to improve the presentation's clarity.\\

D. Alonso-Or\'an is supported by the Spanish MINECO through Juan de la Cierva fellowship FJC2020-046032-I. The major part of this work was done when D. Alonso-Or\'an was supported by the Alexander von Humboldt Foundation. A. D. Mart\'inez is supported by a Fields Ontario Postdoctoral Fellowship financed by NSERC Discovery Grant 311685 and NSERC Grant RGPIN-2018-06487.

\end{document}